\providecommand{\op}[1]{\operatorname{#1}}
\newcommand{\wc}{{\mkern 2mu\cdot\mkern 2mu}}
\newcommand{\R}{\mathbb{R}}
\newcommand{\N}{\mathbb{N}}
\providecommand{\st}{\mathrel\vert}
\providecommand{\Angle}{\measuredangle}
\newcommand{\vertiii}{{\vert\kern-0.25ex\vert\kern-0.25ex\vert}}
\newcommand{\triplenorm}[1]{{\vertiii #1 \vertiii}}
\newcommand{\Triplenorm}[1]{{\left\vert\kern-0.25ex\left\vert\kern-0.25ex\left\vert #1 
    \right\vert\kern-0.25ex\right\vert\kern-0.25ex\right\vert}}
\newcommand{\LAiii}{{\langle\kern-0.4ex\langle\kern-0.4ex\langle}}
\newcommand{\RAiii}{{\rangle\kern-0.4ex\rangle\kern-0.4ex\rangle}}
\newcommand{\tripleprod}[1]{\LAiii #1 \RAiii}
\newcommand{\Prod}[1]{\langle #1 \rangle}
\DeclareMathOperator{\Textint}{{\textstyle\int}}
\newtheorem{theorem}{Theorem}
\newtheorem{lemma}{Lemma}
\newtheorem{corollary}{Corollary}
\newtheorem{proposition}{Proposition}
\newtheorem{example}{Example}
\newtheorem{remark}{Remark}
\begin{document}
\title{A note on Poincar\'e- and Friedrichs-type inequalities}

\author[C. Gräser]{Carsten Gräser}
\address{Carsten Gräser\\
Friedrich-Alexander-Universität Erlangen--Nürnberg\\
Department Mathematik\\
Cauerstr. 11\\
91058 Erlangen, Germany\\
 \href{https://orcid.org/0000-0003-4855-8655}{orcid.org/0000-0003-4855-8655}
}
\email{graeser@math.fau.de}

\thanks{We thank H. Garcke for pointing out an error for
an intermediate result on Navier boundary conditions in $H^2$
in a previous version of this manuscript.}

\keywords{Poincare inequality, coercivity, plate problem}
\maketitle

\begin{abstract}
We introduce a simple criterion to check coercivity
of bilinear forms on subspaces of Hilbert-spaces and Banach-spaces.
The presented criterion allows to derive many standard
and non-standard variants of
Poincar\'e- and Friedrichs-type inequalities with very
little effort.
\end{abstract}

\section{Introduction}
Poincar\'e- and Friedrichs-type inequalities play an important role
in existence theory for elliptic and parabolic partial differential
equations because they allow to show coercivity of bilinear forms
on subspaces of Sobolev spaces. In many applications those
bilinear forms are obtained from the natural inner product of
the Sobolev space by incorporating non-constant coefficients,
dropping lower order derivatives, or adding modified lower order
terms. The considered subspaces are obtained by imposing boundary
or other conditions on solutions.
As a consequence a variety of Poincar\'e- and Friedrichs-type inequalities
where proposed to deal with different bilinear forms or constraining conditions.
Each of these is often proved independently.

The aim of this paper is not to show a specific new variant
of such an inequality. Instead we give simple critera to check coercivity
of bilinear forms, which allows to link many variants of 
Poincar\'e- and Friedrichs-type inequalities. The purpose of this
is two-fold: On the one hand it allows to avoid time consuming research
for a published suitable variant in non-standard situations. On the
other hand it can be used in teaching to easily derive the most
common variants with little effort.

The main critera are introduced in Section~\ref{sec:coercivity}
in a general Hilbert-space setting.
A related result in Banach spaces is also given.
In Section~\ref{sec:poincare}
we show how many variants of Poincar\'e- and Friedrichs-type
inequalities can be derived from a single one using this criterion.
Examples incorporate the most common, as well as some non-standard
variants.
Finally, we apply this in Section~\ref{sec:biharmonic} to
derive coercivity for special boundary conditions of forth-
and eighth order problems.

\section{Coercivity on subspaces of Hilbert- and Banach-spaces}
\label{sec:coercivity}

In the following we will call a bilinear form $a(\wc,\wc)$ coercive
on a normed space $V$ with constant $\gamma>0$ if
\begin{align*}
    \gamma \|v\|^2 \leq a(v,v) \qquad \forall v \in V.
\end{align*} 
First we show an auxiliary result linking the angle between
subspaces to norms of orthogonal projections.

\begin{lemma}\label{lemma:angle}
    Let $H$ be a Hilbert-space, $V,W\subset H$ closed subspaces of $H$ with
    $\op{dim}(W)<\infty$ and $V\cap W = \{0\}$.
    Then we have
    \begin{align}
        0 \leq \sup_{v \in V\setminus\{0\}, w \in W\setminus\{0\}} \frac{(v,w)}{\|v\| \|w\|} = \alpha(V,W) <1,
    \end{align} 
    i.e., $\Angle(V,W) = \op{acos}(\alpha(V,W)) >0$.
\end{lemma}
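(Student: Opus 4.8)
The plan is to rewrite the double supremum as a single supremum of the norm of an orthogonal projection taken over the unit sphere of $W$, and then to exploit the fact that this sphere is compact because $W$ is finite-dimensional. Before that I would dispose of the easy bounds: $\alpha(V,W)\ge 0$ holds because for nonzero $v\in V$, $w\in W$ at least one of $(v,w)$ and $(v,-w)$ is nonnegative and $-w\in W$ as well, while $\alpha(V,W)\le 1$ is just the Cauchy--Schwarz inequality. (If $V=\{0\}$ or $W=\{0\}$ the supremum is over the empty set and one sets $\alpha(V,W)=0$ by convention, so I may assume both subspaces are nontrivial.) By homogeneity in $v$ and $w$ separately I restrict to $\|v\|=\|w\|=1$. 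Since $V$ is closed I can introduce the orthogonal projection $P\colon H\to V$; then $(v,w)=(v,Pw)\le\|Pw\|$ for every unit $v\in V$, with equality when $v$ is a unit multiple of $Pw$, which is possible as long as $Pw\ne 0$. Taking the supremum over $v$ first yields
\begin{align*}
    \alpha(V,W)=\sup_{w\in W,\ \|w\|=1}\|Pw\|.
\end{align*}

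Next I would invoke compactness. The function $w\mapsto\|Pw\|$ is continuous (indeed $1$-Lipschitz) on $H$, and the unit sphere $\{w\in W:\|w\|=1\}$ is compact because $\op{dim}(W)<\infty$. Hence the supremum above is attained at some $w_0\in W$ with $\|w_0\|=1$ and $\|Pw_0\|=\alpha(V,W)$. Now assume, for contradiction, that $\alpha(V,W)=1$. Then $\|Pw_0\|=\|w_0\|$, so the Pythagorean identity $\|w_0\|^2=\|Pw_0\|^2+\|w_0-Pw_0\|^2$ forces $w_0=Pw_0\in V$; but $w_0\in W\setminus\{0\}$, contradicting $V\cap W=\{0\}$. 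Therefore $\alpha(V,W)<1$, and the final assertion $\Angle(V,W)=\op{acos}(\alpha(V,W))>0$ follows since $\op{acos}$ is strictly decreasing with $\op{acos}(1)=0$.

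The step I expect to require the most care is the reduction to $\sup_{\|w\|=1}\|Pw\|$: it relies on $V$ being closed so that $P$ is well defined, and one should note that without closedness the value $1$ could genuinely occur. The hypothesis $\op{dim}(W)<\infty$ then enters precisely to guarantee that this supremum is attained, which is what prevents the value $1$ from being approached in the limit — without it two closed subspaces of a Hilbert space can have angle zero while still meeting only in $\{0\}$.
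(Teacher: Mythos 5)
Your proof is correct, and it takes a genuinely different route from the paper. The paper argues by contradiction with a sequential compactness argument: it picks unit vectors $v_n\in V$, $w_n\in W$ with $(v_n,w_n)\to 1$, extracts a weakly convergent subsequence $v_n\rightharpoonup v$ and (using $\op{dim}W<\infty$) a strongly convergent subsequence $w_n\to w$, and then uses weak lower semicontinuity of the norm to get $\|v-w\|^2=\|v\|^2-2(v,w)+\|w\|^2\le 0$, hence $0\neq v=w\in V\cap W$. You instead identify $\alpha(V,W)$ with $\sup_{w\in W,\,\|w\|=1}\|Pw\|$, where $P$ is the orthogonal projection onto $V$, and use ordinary compactness of the unit sphere of the finite-dimensional space $W$ to see that this supremum is attained; the Pythagorean identity then replaces the paper's weak-limit computation. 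Your reduction is sound (including the degenerate case $Pw=0$, where the inner supremum is $0=\|Pw\|$), and your version is more elementary in that it avoids weak convergence and the appeal to Hahn--Banach; it also has the conceptual advantage of exhibiting $\alpha(V,W)$ as the operator norm of $P$ restricted to $W$, which dovetails with Lemma~\ref{lemma:inverse_ineq} (there the projection onto $W$ is used instead, but $\alpha(V,W)$ is symmetric in its two arguments, so the two readings agree). The paper's argument, in exchange, never needs an explicit maximizer and follows the standard direct-method pattern. Both proofs use $\op{dim}(W)<\infty$ and the closedness of $V$ at the decisive step, and both are complete.
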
 
\begin{proof}
    Assume that this is not the case, then there are sequences $v_n \in V$ and $w_n \in W$
    with $\|v_n\|=\|w_n\|=1$ for all $n$ and $(v_n,w_n) \to 1$.
    By compactness there are subsequences (wlog. also denoted by $v_n$ and $w_n$) and $v \in V$,
    $w\in W$ with $v_n \rightharpoonup v$ and $w_n \to w$. Then we have
    \begin{align*}
        (v_n,w_n) \to (v,w)=1,
    \end{align*}
    $\|w\|=1$, and furthermore by Hahn--Banachs theorem $\|v\| \leq 1$.
    As a consequence we get $\|v-w\|^2 = \|v\|^2 - 2(v,w) + \|w\|^2 \leq 0$ and thus
    $0\neq v=w \in V \cap W$ which contradicts the assumption.
\end{proof} 

Next we use this result to prove an inverse estimate on orthogonal projections
into suitable subspaces.

\begin{lemma}\label{lemma:inverse_ineq}
    Let $H$ be a Hilbert-space, $V,W\subset H$ closed subspaces of $H$ with
    $\op{dim}(W)<\infty$ and $V\cap W = \{0\}$.
    Then the orthogonal projections $P:H \to W$ and $(I-P):H \to W^\perp$
    satisfy the inequalities
    \begin{align*}
        \|P v\| \leq \alpha(V,W) \|v\|, \qquad
        \|v\| \leq \frac{1}{\beta(V,W)} \|(I-P)v\|\qquad \forall v \in V
    \end{align*}
    with constants $\alpha(V,W) = \cos(\Angle(V,W))<1$ and $\beta(V,W) = \sin(\Angle(V,W))>0$.
\end{lemma}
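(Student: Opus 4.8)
The plan is to exploit the self-adjointness and idempotency of the orthogonal projection $P$ together with Lemma~\ref{lemma:angle}. First I would fix $v \in V$ and observe that, because $P = P^* = P^2$, we have $(v,Pv) = (Pv,Pv) = \|Pv\|^2$. If $Pv = 0$ the first inequality is trivial; otherwise $Pv \in W\setminus\{0\}$ and $v \in V\setminus\{0\}$, so the definition of $\alpha(V,W)$ in Lemma~\ref{lemma:angle} gives $(v,Pv) \leq \alpha(V,W)\|v\|\|Pv\|$. Combining these two facts yields $\|Pv\|^2 \leq \alpha(V,W)\|v\|\|Pv\|$, and dividing by $\|Pv\|>0$ gives the first claimed bound $\|Pv\| \leq \alpha(V,W)\|v\|$.

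For the second inequality I would use the Pythagorean identity $\|v\|^2 = \|Pv\|^2 + \|(I-P)v\|^2$, which is valid since $Pv \perp (I-P)v$. Substituting the bound just obtained gives $\|(I-P)v\|^2 = \|v\|^2 - \|Pv\|^2 \geq (1 - \alpha(V,W)^2)\|v\|^2$. Since $\alpha(V,W) = \cos(\Angle(V,W))$, the factor $1 - \alpha(V,W)^2$ equals $\sin^2(\Angle(V,W)) = \beta(V,W)^2$, and by Lemma~\ref{lemma:angle} we have $\Angle(V,W)>0$, hence $\beta(V,W)>0$. Taking square roots and rearranging gives $\|v\| \leq \frac{1}{\beta(V,W)}\|(I-P)v\|$.

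Since every step is an elementary manipulation once Lemma~\ref{lemma:angle} is available, I do not anticipate a genuine obstacle. The only points requiring a little care are the degenerate case $Pv=0$ in the first inequality and the fact that $\beta(V,W)>0$ so that division is legitimate; both are handled as indicated above, the latter being exactly the content of Lemma~\ref{lemma:angle}.
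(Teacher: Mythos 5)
Your proof is correct and follows essentially the same route as the paper: both establish $\|Pv\|^2=(v,Pv)\leq\alpha(V,W)\|v\|\|Pv\|$ via the angle bound from Lemma~\ref{lemma:angle} and then deduce the second inequality from the Pythagorean identity. Your explicit handling of the degenerate case $Pv=0$ is a minor extra care the paper leaves implicit.
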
 
\begin{proof}
    By Lemma~\ref{lemma:angle} we have $\alpha(V,W) = \cos(\Angle(V,W))<1$
    and thus $\beta(V,W) = \sqrt{1-\alpha(V,W)^2}>0$.
    Now let $v \in V$. Then we have
    \begin{align*}
        \|Pv\|^2
            = (Pv,v-(I-P)v)
            = (Pv,v)
            \leq \alpha(V,W) \|Pv\| \|v\|
    \end{align*} 
    and thus $\|Pv\| \leq \alpha(V,W) \|v\|$.
    Hence we get
    \begin{align*}
        \|v\|^2  = \|(I-P)v\|^2 + \|Pv\|^2
        \leq \|(I-P)v\|^2 + \alpha(V,W)^2 \|v\|^2.
    \end{align*}
    Subtracting $\alpha(V,W)^2 \|v\|^2$ and taking the square root provides the assertion.
\end{proof} 

Using these results we are now ready to show a general criterion for coercivity
on subspaces.
To this end we note that the (left-)kernel of a bilinear form
$a(\wc,\wc) : H \times H \to \R$ is defined by
\begin{align*}
  \op{ker} a = \{v \in H \st a(v,w)=0 \, \forall w \in H\}
\end{align*}
and that the bilnear form if called \textit{reflexive} if
\begin{align*}
  \Bigl(a(v,w) = 0 \quad \Rightarrow \quad a(w,v) = 0\Bigr) \qquad \forall v,w \in H.
\end{align*}
If $a(\wc,\wc)$ is symmetric it is also reflexive and if $a(\wc,\wc)$ is symmetric
and positive semi-definite, then the Cauchy-Schwarz-inequality implies that
the kernel can equivalently be characterized as
\begin{align*}
  \op{ker} a = \{v \in H \st a(v,v)=0\}.
\end{align*}

\begin{proposition}\label{prop:coercivity}
    Let $H$ be a Hilbert-space and $a(\wc,\wc) : H \times H \to \R$
    a reflexive (e.g. symmetric) bilinear form
    with finite dimensional kernel $\op{ker} a$.
    Furthermore assume that
    $a(\wc,\wc)$ is coercive on $(\op{ker}a)^\perp$ with constant $\gamma > 0$.
    Then $a(\wc,\wc)$ is coercive with constant $\gamma \beta(V,\op{ker} a)^2 > 0$
    on any closed subspace $V$
    of $H$ with $V\cap \op{ker} a = \{0\}$.
\end{proposition}
\begin{proof}
    Let $v \in V$ and $P:H \to \op{ker} a$ the orthogonal projection
    into $\op{ker} a$.
    Then Lemma~\ref{lemma:inverse_ineq}
    and coercivity on $(\op{ker}a)^\perp = (I-P)(H)$
    provide $\gamma \beta(V,\op{ker}a)^2>0$ and 
    \begin{align*}
        \begin{split}
            \gamma\beta(V,\op{ker} a)^2 \|v\|^2
            \leq \gamma \|(I-P)v\|^2
            &\leq a((I-P)v,(I-P)v)
                = a(v,v).
        \end{split} 
    \end{align*} 
\end{proof} 

In many situation coercivity is not obtained by restriction to
suitable subspaces, but by augmenting the bilinear form $a(\wc,\wc)$
in order to obtain coercivity on the whole space.

\begin{proposition}\label{prop:coercivity_augmented}
    In addition to the assumptions of Proposition~\ref{prop:coercivity}
    assume that $b(\wc,\wc) :H \times H \to \R$ is
    a continuous positive semi-definite bilinear form which is
    positive definite on $\op{ker} a$.
    Then $a(\wc,\wc) + b(\wc,\wc)$ is coercive on $H$.
\end{proposition}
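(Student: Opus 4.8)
The plan is to reduce the statement to Lemma~\ref{lemma:inverse_ineq} by writing $H$ as a direct sum of $\op{ker} a$ and a subspace chosen so that the cross terms of $b$ vanish. Set $W = \op{ker} a$, which is finite dimensional by hypothesis, and let
\[
    N = \{v \in H \st b(v,w) = 0 \ \forall w \in W\}.
\]
Since $b$ is continuous and $W$ is finite dimensional, $N$ is an intersection of finitely many kernels of continuous linear functionals, hence a closed subspace. Because $b$ is positive definite on $W$, a vector in $W\cap N$ is $b$-orthogonal to itself and therefore zero, so $W\cap N = \{0\}$; and since $b$ restricted to $W$ is a genuine inner product on a finite dimensional space, the Riesz representation theorem applied on $W$ shows that every $v\in H$ can be split as $v = w + n$ with $w\in W$ and $n\in N$ (pick $w\in W$ with $b(w,\wc) = b(v,\wc)$ on $W$). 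Thus $H = W\oplus N$ as a topological direct sum.

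Next I would record two lower bounds. First, since $v\mapsto b(v,v)$ is continuous and strictly positive on the (compact) unit sphere of the finite dimensional space $W$, there is $\mu>0$ with $b(w,w)\ge \mu\|w\|^2$ for all $w\in W$. Second, writing $P\colon H\to W$ for the orthogonal projection, the same identity used in the proof of Proposition~\ref{prop:coercivity} gives $a(v,v) = a((I-P)v,(I-P)v)\ge \gamma\|(I-P)v\|^2$ for every $v\in H$, while Lemma~\ref{lemma:inverse_ineq} applied to the pair $(N,W)$ yields $\|(I-P)n\|\ge \beta(N,W)\|n\|$ for all $n\in N$.

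Finally I would combine everything. For $v = w+n$ with $w\in W$, $n\in N$, the defining property of $N$ removes the cross term, so $b(v,v) = b(w,w) + b(n,n)\ge \mu\|w\|^2$; and since $(I-P)w=0$ we have $(I-P)v = (I-P)n$, whence $a(v,v)\ge \gamma\|(I-P)n\|^2\ge \gamma\beta(N,W)^2\|n\|^2$. Together with $\|v\|^2\le 2\|w\|^2 + 2\|n\|^2$ this gives
\[
    a(v,v) + b(v,v)\ge \tfrac{1}{2}\min\{\mu,\ \gamma\beta(N,W)^2\}\,\|v\|^2,
\]
which is coercivity on $H$. The only genuinely non-routine step is choosing the complement $N$ to be $b$-orthogonal to $\op{ker} a$; after that the argument is bookkeeping with Lemma~\ref{lemma:inverse_ineq}. (Alternatively one can argue by contradiction: if $\|v_n\|=1$ with $a(v_n,v_n)+b(v_n,v_n)\to 0$, then both terms are nonnegative and tend to $0$, so $\gamma\|(I-P)v_n\|^2\le a(v_n,v_n)\to 0$ forces $(I-P)v_n\to 0$; hence a subsequence of $Pv_n$ converges in the finite dimensional space $\op{ker} a$ to some $w$ with $\|w\|=1$, so $v_n\to w$ and $0 = \lim b(v_n,v_n) = b(w,w)>0$, a contradiction.)
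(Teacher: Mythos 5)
Your argument is correct, but it takes a genuinely different route from the paper. The paper works entirely with the metric orthogonal projection $P$ onto $\op{ker}a$ and handles the cross terms of $b$ by brute force: it bounds $\|Pv\|^2 \leq C_2\, b(Pv,Pv)$ (coercivity of $b$ on the finite-dimensional kernel, your constant $\mu$), then uses the semi-definiteness estimate $b(Pv,Pv) \leq 2b(v,v) + 2b(v-Pv,v-Pv)$ together with continuity of $b$ to absorb the last term into $\|v-Pv\|^2$, which is in turn controlled by $a(v,v)$ via coercivity on $(\op{ker}a)^\perp$. You instead eliminate the cross terms exactly by passing to the $b$-orthogonal complement $N$ of $\op{ker}a$, at the price of having to verify that $H=\op{ker}a\oplus N$ is a topological direct sum and of invoking Lemma~\ref{lemma:inverse_ineq} a second time for the pair $(N,\op{ker}a)$ to convert $\|(I-P)n\|$ back into $\|n\|$. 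Both proofs are constructive with explicit constants; the paper's is shorter and needs no new subspace, while yours is structurally cleaner in that $a$ and $b$ each control one summand of an honest direct-sum decomposition, and it makes visible exactly where positive definiteness of $b$ on $\op{ker}a$ enters (namely in $W\cap N=\{0\}$ and in $\mu>0$). Your parenthetical compactness argument is a third, non-quantitative variant in the spirit of the paper's proof of Lemma~\ref{lemma:angle}. One small point worth making explicit in your main argument: the identity $a(v,v)=a((I-P)v,(I-P)v)$ rests on the implicit positive semi-definiteness of $a$ (so that $a(Pv,\wc)\equiv 0$ by Cauchy--Schwarz), but since the paper's own proof of Proposition~\ref{prop:coercivity} uses the same identity, you are on equal footing there.
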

\begin{proof}
    First we note that Young's inequality (applied to the symmetric part of $b(\wc,\wc)$) provides
    \begin{align*}
      b(x,x) \leq 2 b(y,y) + 2 b(x-y,x-y) \qquad \forall x,y \in H.
    \end{align*}
    Since $\op{ker} a$ is finite dimensional, positive definiteness implies
    coercivity of $b(\wc,\wc)$ on $\op{ker} a$ with some constant $\delta>0$.
    For $P$ as in the proof of Proposition~\ref{prop:coercivity}
    we now have
    \begin{align*}
        \begin{split}
            \|v\|^2
                &= \|v-Pv\|^2 + \|Pv\|^2
                \leq \|v-Pv\|^2 + \delta^{-1} b(Pv,Pv) \\
                &\leq \|v-Pv\|^2  + 2 \delta^{-1} \Bigl( b(v,v) + b(v-Pv,v-Pv) \Bigr) \\
                &\leq (1+2\delta^{-1}C)\|v-Pv\|^2 + 2\delta^{-1} b(v,v) \\
                &\leq (1+2\delta^{-1}C)\gamma^{-1} a(v,v) + 2 \delta^{-1} b(v,v).
        \end{split}
    \end{align*}
\end{proof}

\begin{remark}
  The finite dimensionality required in Lemma~\ref{lemma:angle}, Lemma~\ref{lemma:inverse_ineq},
  and Proposition~\ref{prop:coercivity} is ineed necessary for the respective statements.
  To see this we give counter examples based on the same setting:
  Consider the sequence space $H=\ell^2$ with the canonical orthonormal
  basis $(e^n)_{n \in \mathbb{N}}$ where $e^n_k = \delta_{kn}$
  and define the sequences $(v^n)$ and $(w^n)$ by
  \begin{align*}
    v^n &= e^{2n} + \frac{1}{n}e^{2n+1}, &
    w^n &= e^{2n}.
  \end{align*}
  Then the closed spanned subspaces
  \begin{align*}
    V &= \overline{\operatorname{span}\{v^n\,|\, n \in \mathbb{N}\}}, &
    W &= \overline{\operatorname{span}\{w^n\,|\, n \in \mathbb{N}\}}
  \end{align*}
  satisfy $V \cap W =\{0\}$.
  On the other hand we have
  \begin{align*}
    \frac{(v^n,w^n)}{\|v^n\|\|w^n\|} = \sqrt{\frac{n^2}{n^2+1}} \underset{n\to \infty}{\longrightarrow} 1
  \end{align*}
  and thus $\alpha(V,W) = 1$ and $\Angle(V,W) = 0$ which gives a counter example for Lemma~\ref{lemma:angle}.
  Now consider the orthogonal projection $P:H \to W$.
  Then we have
  \begin{align*}
    (Px)_k
    &=
    \begin{cases}
      x_k &\text{ if $k$ is even},\\
      0 &\text{ if $k$ is odd},
    \end{cases}
    &
    ((I-P)x)_k
    &=
    \begin{cases}
      0 &\text{ if $k$ is even},\\
      x_k &\text{ if $k$ is odd}
    \end{cases}
  \end{align*}
  and
  \begin{align*}
    \frac{\|v^n\|}{\|(I-P)v_n\|}
      = \sqrt{n^2+1} \underset{n\to \infty}{\longrightarrow} \infty.
  \end{align*}
  Hence there is no finite constant $C$ such that the inverse estimate $\|v\| \leq C \|(I-P)v\|$
  holds for all $v \in V$
  which gives a counter example for Lemma~\ref{lemma:inverse_ineq}.
  Finally, consider the bilinear form $a(v,w) = ((I-P)v,(I-P)w)$.
  Then we have $\operatorname{ker} a = W$
  and $a(\cdot,\cdot)$ is coercive on $(\operatorname{ker} a)^\perp$
  because $a(v,v) = \|v\|^2$ for $v \in (\operatorname{ker} a)^\perp = (I-P)(H)$.
  However, for $v^n \in V$ we have
  \begin{align*}
    \frac{a(v^n,v^n)}{\|v^n\|^2} = \frac{1}{n^2+1}  \underset{n\to \infty}{\longrightarrow} 0
  \end{align*}
  such that $a(\cdot,\cdot)$ is not coercive on $V$
  which gives a counter example for Proposition~\ref{prop:coercivity}.
\end{remark}

Although orthogonal projections and angles are at the heart of the above given statements and proofs,
we also give a related result in a Banach space setting, that is even true without the
assumption of finite dimensionality. Notice that this result is very close to
(but not a consequence of) a classical result based on projections as shown e.g.
in \cite[Lemma~4.1.3]{Ziemer1989}.

\begin{proposition}\label{prop:coercivity_banachspace}
    Let $X$ be a Banach space that is compactly embedded into a normed space $Y$
    and denote the corresponding norms by $\|\cdot\|_X$ and $\|\cdot\|_Y$, respectively.
    Furthermore let $|\cdot|$ be a continuous semi-norm on $X$
    such that $\|\cdot\|_X \leq C(\|\cdot\|_Y + |\cdot|)$ for some $C>0$
    and $V$ a closed subspace of $X$
    with $V\cap \{v\in X \,|\, |x|=0\}= \{0\}$.
    Then there is a constant $\gamma>0$ such that
    \begin{align*}
      \gamma\|v\|_Y \leq |v| \qquad \forall v \in V
    \end{align*}
    and thus $|\cdot|$ is coercive on $V$ with
    \begin{align*}
      \frac{\gamma}{C(1+\gamma)}\|v\|_X \leq |v| \qquad \forall v \in V.
    \end{align*}
\end{proposition}
\begin{proof}
  Assume that there is no such $\gamma>0$. Then there is a sequence $v_n \in V$ such that
  \begin{align*}
    \|v_n\|_Y = 1, \qquad |v_n| < \frac{1}{n}
  \end{align*}
  and thus $\|v_n\|_X \leq C2$ for all  $n \in \mathbb{N}$.
  By the compact embedding there is a subsequence (wlog. also denoted by $v_n$)
  and some $v \in Y$ such that $\|v_n -v\|_Y \to 0$.
  Using this, $|v_n| \to 0$, and
  \begin{align*}
    \|v_k - v_l\|_X \leq C\Bigl(\|v_k - v\|_Y + \|v_l - v\|_Y + |v_k| + |v_l| \Bigr)
  \end{align*}
  we find that $v_n$ is a $\|\cdot\|_X$-Cauchy-sequence
  and thus $\|v_n - \tilde{v} \|_X \to 0$ for some $\tilde{v} \in V$.
  By the compact embedding, $\|\cdot\|_Y$ is $\|\cdot\|_X$-continuous
  such that we also have $\|v_n - \tilde{v}\|_Y \to 0$ and thus $v = \tilde{v} \in V$.
  Frome those convergence results we get for $v$
  \begin{align*}
    \|v_n - v\|_Y &\to 0
      & &\Rightarrow &
      &1=\|v_n\|_Y \to \|v\|_Y=1,\\
    \|v_n - v\|_X &\to 0
      & &\Rightarrow &
      &|v_n| \to |v|=0
  \end{align*}
  and hence $0 \neq v \in V \cap \{v\in X \,|\, |x|=0\}$
  which contradicts the assumptions.
\end{proof}

\section{Poincar\'e-type inequalities in $H^m(\Omega)$}
\label{sec:poincare}
Now we consider Poincar\'e-type inequalities in $H^m(\Omega)$ with $m \in \N_0$.
Throughout this section let $\Omega \subset \R^d$ be a bounded domain with
Lipschitz boundary. On $H^m(\Omega)$ we use the inner product
\begin{align*}
    (u,v)_m = \sum_{|s|\leq m} \int_\Omega D^s u D^s v \, dx
\end{align*}
and the induced norm $\|\wc\|_m$,
where we used the classical multi-index notation.

In the following we investigate coercivity of the bilinear form
\begin{align*}
    \Prod{u,v}_m = \sum_{|s|= m} \int_\Omega D^s u D^s v \, dx
\end{align*}
and augmented variants on subspaces of $H^m(\Omega)$.
Note that this bilinear form induces the $H^m$-seminorm $|\wc|_m = \Prod{ \wc,\wc }_m^{1/2}$.
The main ingredients are the characterization
of the kernel of $\Prod{ \wc,\wc }_m$ and
the coercivity on its orthogonal complement.

\begin{lemma}\label{lem:semi_norm_kernel}
    The kernel of $\Prod{ \wc,\wc }_m$ is given by $\op{ker} (\Prod{ \wc,\wc }_m) = \mathcal{P}_{m-1}$
    where $\mathcal{P}_k$ is the space of polynomials with degree $\leq k$
    on $\R^d$.
\end{lemma}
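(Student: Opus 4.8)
The plan is to show the two inclusions $\mathcal{P}_{m-1} \subset \op{ker}(\Prod{\wc,\wc}_m)$ and $\op{ker}(\Prod{\wc,\wc}_m) \subset \mathcal{P}_{m-1}$ separately. The first inclusion is the easy direction: if $p \in \mathcal{P}_{m-1}$, then every $m$-th order derivative $D^s p$ with $|s| = m$ vanishes identically, so $\Prod{p,p}_m = \sum_{|s|=m} \int_\Omega |D^s p|^2\,dx = 0$, hence $p \in \op{ker}(\Prod{\wc,\wc}_m)$.

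For the reverse inclusion, suppose $u \in H^m(\Omega)$ satisfies $\Prod{u,u}_m = 0$. Since this is a sum of nonnegative terms, $\int_\Omega |D^s u|^2\,dx = 0$ for every multi-index $s$ with $|s| = m$, so $D^s u = 0$ in $L^2(\Omega)$, i.e.\ all distributional derivatives of $u$ of order exactly $m$ vanish. I would then invoke the standard fact that a distribution on a (connected) open set whose derivatives of some fixed order $m$ all vanish is a polynomial of degree at most $m-1$. Concretely, since $\Omega$ is a bounded Lipschitz domain it is connected; on each ball $B \subset \Omega$ one can argue by induction on $m$: the first-order derivatives $\partial_i u$ have all their $(m-1)$-st order derivatives equal to zero, so by the induction hypothesis each $\partial_i u$ agrees with a polynomial $q_i \in \mathcal{P}_{m-2}$; a compatibility/integration argument (the $q_i$ satisfy $\partial_j q_i = \partial_i q_j$) then produces a polynomial $p \in \mathcal{P}_{m-1}$ with $\partial_i p = q_i = \partial_i u$, whence $u - p$ is locally constant, so $u = p$ on $B$; the base case $m=1$ is the classical statement that a Sobolev function with zero gradient on a connected set is constant. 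Patching these local polynomials together using connectedness of $\Omega$ (two polynomials agreeing on an open set are equal) shows $u \in \mathcal{P}_{m-1}$.

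The only genuine subtlety is ensuring the distributional argument is applied on a domain where "locally constant implies constant" holds, which is exactly where Lipschitz regularity (hence connectedness, and the absence of pathologies that would obstruct the Sobolev Poincaré-type step) of $\Omega$ enters; everything else is bookkeeping with multi-indices. I expect this step — justifying that vanishing top-order distributional derivatives force a global polynomial — to be the main point, though it is entirely standard.
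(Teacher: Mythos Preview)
Your proposal is correct and follows the same approach as the paper: both directions hinge on the standard fact that a function in $H^m(\Omega)$ with all $m$-th order weak derivatives vanishing is a polynomial of degree at most $m-1$. The paper simply asserts this in one line, whereas you sketch its proof via induction and connectedness; note that connectedness comes directly from the word ``domain'' in the paper's standing assumptions rather than from Lipschitz regularity as you suggest.
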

\begin{proof}
    Let $\Prod{ v,v }_m = 0$. Then we have $D^s v=0$ for all multi-indices
    $s$ with $|s|=m$ and hence $v \in \mathcal{P}_{m-1}$.
\end{proof}

Next we show that $\Prod{ \wc,\wc }_m$ is coercive on the orthogonal
complement of $\mathcal{P}_{m-1}$. To this end we need the
following classical version of the Poincar\'e inequality on $H^m(\Omega)$.
All other versions will be derived from this one.

\begin{theorem}\label{thm:poincare_Hm}
    There is a constant $C>0$ such that
    \begin{align*}
        \|v\|_m^2 \leq C \Bigl(
            |v|_m^2
            + \sum_{|s|<m} \bigl( \int_\Omega D^s v\, dx\bigr)^2
            \Bigr)
            \qquad \forall v \in H^m(\Omega).
    \end{align*}
\end{theorem}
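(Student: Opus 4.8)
The plan is to prove the stated Poincaré inequality by a standard Rellich--Kondrachov compactness (Tarski--Seidenberg, or rather ``Peetre--Tartar'') argument, which is the usual route to this kind of estimate. Define the candidate seminorm-type quantity on the right-hand side and argue by contradiction: if no such constant $C$ exists, there is a sequence $v_n \in H^m(\Omega)$ with $\|v_n\|_m = 1$ but
\begin{align*}
    |v_n|_m^2 + \sum_{|s|<m} \Bigl( \int_\Omega D^s v_n \, dx \Bigr)^2 \to 0 .
\end{align*}
Since $(v_n)$ is bounded in $H^m(\Omega)$, after passing to a subsequence we may assume $v_n \rightharpoonup v$ weakly in $H^m(\Omega)$, and by the Rellich--Kondrachov theorem (using that $\Omega$ is a bounded Lipschitz domain) $v_n \to v$ strongly in $H^{m-1}(\Omega)$.

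The next step is to identify the limit $v$. From $|v_n|_m \to 0$ and weak lower semicontinuity of the seminorm we get $|v|_m = 0$, so by Lemma~\ref{lem:semi_norm_kernel} we have $v \in \mathcal{P}_{m-1}$. Moreover, since $v_n \to v$ in $H^{m-1}(\Omega)$, each functional $w \mapsto \int_\Omega D^s w \, dx$ for $|s| < m$ is continuous on $H^{m-1}(\Omega)$, so $\int_\Omega D^s v \, dx = \lim_n \int_\Omega D^s v_n \, dx = 0$ for all $|s| < m$. I would then show that a polynomial $v \in \mathcal{P}_{m-1}$ with $\int_\Omega D^s v \, dx = 0$ for every multi-index $s$ with $|s| < m$ must vanish: proceed by induction on the (total) degree, noting that for $v$ of degree $k \le m-1$ the top-degree part has derivatives $D^s v$ of order $|s| = k$ that are constants, and $\int_\Omega D^s v \, dx = 0$ forces those constants to be zero, lowering the degree; repeating drives $v$ to $0$.

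Finally I would derive the contradiction with $\|v_n\|_m = 1$. We have $v = 0$, and $v_n \to v = 0$ strongly in $H^{m-1}(\Omega)$, so $\|v_n\|_{m-1} \to 0$; combined with $|v_n|_m \to 0$ this gives $\|v_n\|_m^2 = \|v_n\|_{m-1}^2 + |v_n|_m^2 \to 0$, contradicting $\|v_n\|_m = 1$. (For $m = 0$ the statement is trivial, as the right-hand side already contains $(\int_\Omega v\,dx)^2$ together with $|v|_0^2 = \|v\|_0^2$; the interesting case is $m \ge 1$.)

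The main obstacle is the compactness input: one must invoke the Rellich--Kondrachov embedding $H^m(\Omega) \hookrightarrow\hookrightarrow H^{m-1}(\Omega)$, which is exactly where the Lipschitz regularity of $\partial\Omega$ (or at least some extension/cone property) is used; everything else is soft functional analysis plus the elementary polynomial argument. A secondary point to handle carefully is the passage to the limit in the lower-order functionals — this is why strong convergence in $H^{m-1}$, not just weak convergence in $H^m$, is needed — and the induction showing the vanishing-moment conditions pin down the zero polynomial uniquely.
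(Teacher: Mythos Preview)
Your argument is correct and is exactly the standard Rellich--Kondrachov compactness (``Peetre--Tartar'') proof of this classical inequality. Note, however, that the paper does not prove this theorem at all: its proof is a one-line citation to Wloka's book, treating the result as a black box on which the rest of Section~\ref{sec:poincare} is built. So you have supplied a full argument where the paper deliberately outsources one; what you wrote is essentially the proof one finds in Wloka or Ne\v{c}as.

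Two small remarks. The parenthetical ``Tarski--Seidenberg'' is a slip---that is an unrelated theorem in real algebraic geometry; ``Peetre--Tartar'' is the name you want. And in the $m=0$ aside, the sum $\sum_{|s|<0}$ is empty, so the right-hand side does \emph{not} contain $(\int_\Omega v\,dx)^2$; the triviality comes simply from $|v|_0=\|v\|_0$. Also, for the passage to the limit in the moment functionals $w\mapsto\int_\Omega D^s w\,dx$, weak convergence in $H^m$ already suffices (these are bounded linear functionals on $H^m$); the place where you genuinely need strong $H^{m-1}$ convergence is the final contradiction step $\|v_n\|_{m-1}\to 0$. None of this affects the validity of the proof.
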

\begin{proof}
    See \cite[Theorem~7.2]{Wloka1987}.
\end{proof}

\begin{lemma}\label{lem:projected_poincare}
    Let $P:H^m(\Omega) \to \mathcal{P}_{m-1}$
    be the orthogonal projection into $\mathcal{P}_{m-1}$.
    Then
    \begin{align*}
        \|v-P v\|_m^2 \leq C
            |v|_m^2
            \qquad \forall v \in H^m(\Omega)
    \end{align*}
    for the same constant $C$ as in Theorem~\ref{thm:poincare_Hm}.
\end{lemma}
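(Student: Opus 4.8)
The plan is to reduce the claim to Theorem~\ref{thm:poincare_Hm}, applied not to $v$ itself but to a carefully chosen polynomial correction of $v$, and then to exploit the minimization property of the orthogonal projection. The obstacle to using Theorem~\ref{thm:poincare_Hm} directly is the block of lower-order terms $\sum_{|s|<m}(\int_\Omega D^s w\,dx)^2$: these do not vanish for a generic $w \in H^m(\Omega)$, and in particular not for $w = v - Pv$. So the first task is to construct, for each $v \in H^m(\Omega)$, a polynomial $q \in \mathcal{P}_{m-1}$ with $\int_\Omega D^s(v-q)\,dx = 0$ for every multi-index $s$ with $|s|<m$; only afterwards will $Pv$ enter.

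I would obtain such a $q$ from a dimension count. Consider the linear map $T\colon \mathcal{P}_{m-1}\to \R^N$, $Tp=(\int_\Omega D^sp\,dx)_{|s|<m}$, where $N$ equals the number of multi-indices with $|s|<m$, hence $N=\dim\mathcal{P}_{m-1}$. The map $T$ is injective: if $Tp=0$ and $p\neq 0$ has degree $k\le m-1$, then for each $s$ with $|s|=k$ the derivative $D^sp$ is a constant, so $D^sp=|\Omega|^{-1}\int_\Omega D^sp\,dx=0$; thus the degree-$k$ part of $p$ vanishes, contradicting $\deg p=k$. Since domain and target have equal dimension, $T$ is bijective, so there is a (unique) $q\in\mathcal{P}_{m-1}$ with $Tq=(\int_\Omega D^s v\,dx)_{|s|<m}$, i.e.\ $\int_\Omega D^s(v-q)\,dx=0$ for all $|s|<m$.

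Now apply Theorem~\ref{thm:poincare_Hm} to $v-q$: by the choice of $q$ all lower-order terms drop out, leaving $\|v-q\|_m^2\le C|v-q|_m^2$. Because $q\in\mathcal{P}_{m-1}$ (Lemma~\ref{lem:semi_norm_kernel}) we have $D^sq=0$ for $|s|=m$, hence $D^s(v-q)=D^sv$ for $|s|=m$ and therefore $|v-q|_m=|v|_m$. This already gives $\|v-q\|_m^2\le C|v|_m^2$. To conclude, recall that $Pv$ is the best approximation of $v$ from $\mathcal{P}_{m-1}$ in the norm $\|\wc\|_m$, so $\|v-Pv\|_m\le\|v-q\|_m$ since $q\in\mathcal{P}_{m-1}$; combining the last two inequalities yields $\|v-Pv\|_m^2\le C|v|_m^2$ with the very constant $C$ of Theorem~\ref{thm:poincare_Hm}.

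The only genuinely nontrivial step is the construction of $q$ in the second paragraph; after that the argument is a one-line comparison. The reason the constant is preserved is exactly that one compares $Pv$ against this tailor-made $q$ — whose lower-order moments match those of $v$ — rather than working with $v-Pv$ directly.
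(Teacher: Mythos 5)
Your proof is correct, and the overall strategy coincides with the paper's: compare $Pv$ against an auxiliary polynomial in $\mathcal{P}_{m-1}$ whose lower-order moments $\int_\Omega D^s(\wc)\,dx$, $|s|<m$, match those of $v$, apply Theorem~\ref{thm:poincare_Hm} to the difference so that the lower-order block drops out, observe that subtracting a polynomial of degree $\leq m-1$ leaves $|\wc|_m$ unchanged, and finish with the best-approximation property of the orthogonal projection $P$. Where you diverge is in how the auxiliary polynomial is produced. The paper obtains it as the orthogonal projection $\hat{P}v$ with respect to the modified inner product $\tripleprod{u,v}_m = \Prod{u,v}_m + \sum_{|s|<m}\int_\Omega D^s u\,dx\int_\Omega D^s v\,dx$ (an equivalent inner product by Theorem~\ref{thm:poincare_Hm}) and then extracts the individual moment conditions from Galerkin orthogonality by an induction over $|s|$; you instead invert the moment map $T:\mathcal{P}_{m-1}\to\R^N$ directly, proving injectivity by the degree argument and concluding bijectivity from the dimension count $N=\dim\mathcal{P}_{m-1}$. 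Your construction is more elementary and arguably more transparent --- the injectivity argument is exactly the content hidden in the paper's induction --- and by uniqueness your $q$ coincides with the paper's $\hat{P}v$. Both routes preserve the constant $C$ for the same reason: the projection $Pv$ is compared against a competitor tailored so that Theorem~\ref{thm:poincare_Hm} applies with no lower-order contribution.
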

\begin{proof}
    We will use the modified inner product
    \begin{align*}
        \tripleprod{u,v}_m
            = \sum_{|s|= m} \int_\Omega D^s u D^s v \, dx
            + \sum_{|s|<m} \int_\Omega D^s u\, dx \int_\Omega D^s v\, dx
    \end{align*}
    on $H^m(\Omega)$. By Theorem~\ref{thm:poincare_Hm} this
    induces an equivalent norm $\triplenorm{\wc}_m$. The orthogonal
    projection into $\mathcal{P}_{m-1}$ with respect to $\triplenorm{\wc}_m$
    will be denoted by $\hat{P}$.

    Now let $u \in H^m$. Utilizing $D^s v = 0$ for $|s|=m$ and any $v \in \mathcal{P}_{m-1}$
    and Galerkin-orthogonality we get
    \begin{align}\label{eq:galerkin}
        0 = \tripleprod{u-\hat{P}u,v}_m
        = \sum_{|s|<m} \int_\Omega D^s (u-\hat{P}u)\, dx \int_\Omega D^s v\, dx \qquad \forall v \in \mathcal{P}_{m-1}.
    \end{align}
    We will inductively show that \eqref{eq:galerkin} induces
    \begin{align}\label{eq:int_zero}
        \int_\Omega D^s (u-\hat{P}u)\, dx = 0
    \end{align}
    for all $|s|<m$.
    For $s=(0,\dots,0)$ this follows from testing with $v= x^s = 1 = D^s v$.
    Now let $|s'|<m$ and assume that \eqref{eq:int_zero} is true for all $|s|<|s'|$.
    Then all terms in \eqref{eq:galerkin} for such $s$ vanish.
    To handle the remaining terms set $v=x^{s'}$.
    Then we have $D^r v = 0$ if either $|r|>|s'|$ or both $|r|=|s'|$ and $r\neq s'$ are true
    and $D^{s'} v = \operatorname{const} \neq 0$.
    Hence testing with $v$ gives \eqref{eq:int_zero} with $s=s'$.

    Using the best approximation property of $Pu$ with respect to $\|\cdot\|_m$,
    Theorem~\ref{thm:poincare_Hm}, identity \eqref{eq:int_zero},
    and $D^s \hat{P}v = 0$ for $|s|=m$ we get
    \begin{align}\label{eq:projection_estimate}
        \|u-P u\|_m^2
            &\leq \|u-\hat{P} u\|_m^2
            \leq C \triplenorm{u-\hat{P} u}_m^2
            = C |u-\hat{P} u|_m^2
            = C |u|_m^2.
    \end{align}
\end{proof}

As an immediate consequence of this and $(\mathcal{P}_{m-1})^\perp = (I-P)(H^m(\Omega))$ we get:
\begin{corollary}
    \label{cor:semi_norm_coercivity}
    The bilinear form $\Prod{\wc,\wc}_m$ is coercive on $(\op{ker} (\Prod{\wc,\wc}_m))^\perp = (\mathcal{P}_{m-1})^\perp$.
\end{corollary}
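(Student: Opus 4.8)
The plan is to read the statement off directly from the kernel characterization together with the projected Poincar\'e inequality already established, so the proof is essentially bookkeeping. By Lemma~\ref{lem:semi_norm_kernel} we have $\op{ker}(\Prod{\wc,\wc}_m) = \mathcal{P}_{m-1}$, so the subspace in question is $(\mathcal{P}_{m-1})^\perp$, where the orthogonal complement is taken with respect to the full inner product $(\wc,\wc)_m$ on $H^m(\Omega)$. This is precisely the range $(I-P)(H^m(\Omega))$ of the orthogonal projection $P\colon H^m(\Omega)\to\mathcal{P}_{m-1}$ appearing in Lemma~\ref{lem:projected_poincare}.

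Next I would fix an arbitrary $v\in(\mathcal{P}_{m-1})^\perp$. Then $Pv=0$, hence $v-Pv=v$, and Lemma~\ref{lem:projected_poincare} together with $|v|_m^2 = \Prod{v,v}_m$ gives
\begin{align*}
    \|v\|_m^2 = \|v-Pv\|_m^2 \leq C\,|v|_m^2 = C\,\Prod{v,v}_m .
\end{align*}
Dividing by $C$ yields $\tfrac{1}{C}\|v\|_m^2 \leq \Prod{v,v}_m$ for every $v\in(\mathcal{P}_{m-1})^\perp$, i.e.\ $\Prod{\wc,\wc}_m$ is coercive on this subspace with constant $\gamma=1/C>0$.

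The only point that requires any care is making sure that the orthogonal complement is understood with respect to the same inner product $(\wc,\wc)_m$ that was used to define $P$ in Lemma~\ref{lem:projected_poincare}, so that membership in $(\mathcal{P}_{m-1})^\perp$ genuinely forces $Pv=0$. Beyond this identification there is no real obstacle: all analytic content is already encapsulated in Theorem~\ref{thm:poincare_Hm} and Lemma~\ref{lem:projected_poincare}, and this corollary merely restates their combination in the language of coercivity, which is exactly the form needed to feed into Proposition~\ref{prop:coercivity}.
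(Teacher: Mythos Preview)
Your proposal is correct and matches the paper's approach exactly: the paper presents this corollary as an immediate consequence of Lemma~\ref{lem:projected_poincare} together with the identification $(\mathcal{P}_{m-1})^\perp = (I-P)(H^m(\Omega))$, and you have simply written out the one-line computation that makes this explicit. Your remark about the orthogonal complement being taken with respect to $(\wc,\wc)_m$ is the only point needing attention, and you handled it correctly.
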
 

As a consequence of the kernel characterization in Lemma~\ref{lem:semi_norm_kernel}
and the coercivity result in Corollary~\ref{cor:semi_norm_coercivity}
we can use Proposition~\ref{prop:coercivity}
to show coercivity on subspaces of $H^m(\Omega)$.

\begin{corollary}\label{cor:coercive_Hm_subspace}
    Let $V\subset H^m(\Omega)$ be a closed subspace with $V \cap \mathcal{P}_{m-1}= \{0\}$.
    Then $\Prod{ \wc,\wc }_m$ is coercive on $V$ and $|\wc|_m$ is equivalent to $\|\wc\|_m$ on $V$.
\end{corollary}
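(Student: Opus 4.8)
The plan is to apply Proposition~\ref{prop:coercivity} to the bilinear form $a(\wc,\wc) = \Prod{\wc,\wc}_m$ on the Hilbert space $H = H^m(\Omega)$, so I first need to check its hypotheses. Continuity of $\Prod{\wc,\wc}_m$ on $H^m(\Omega)$ is immediate since $|v|_m \leq \|v\|_m$. Symmetry is obvious. By Lemma~\ref{lem:semi_norm_kernel} the kernel is $\mathcal{P}_{m-1}$, which is finite dimensional, and by Corollary~\ref{cor:semi_norm_coercivity} the form is coercive on $(\op{ker}\Prod{\wc,\wc}_m)^\perp = (\mathcal{P}_{m-1})^\perp$ with some constant $\gamma>0$. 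The hypothesis $V \cap \op{ker}\Prod{\wc,\wc}_m = \{0\}$ is exactly the assumed $V \cap \mathcal{P}_{m-1} = \{0\}$. Hence Proposition~\ref{prop:coercivity} yields that $\Prod{\wc,\wc}_m$ is coercive on $V$ with constant $\gamma\beta(V,\mathcal{P}_{m-1})^2>0$, i.e.\ $\gamma\beta(V,\mathcal{P}_{m-1})^2\|v\|_m^2 \leq |v|_m^2$ for all $v \in V$.

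For the norm equivalence on $V$, the coercivity inequality just established gives the lower bound $|v|_m^2 \geq \gamma\beta(V,\mathcal{P}_{m-1})^2 \|v\|_m^2$, and the trivial estimate $|v|_m^2 = \Prod{v,v}_m = \sum_{|s|=m}\int_\Omega (D^s v)^2\,dx \leq \sum_{|s|\leq m}\int_\Omega (D^s v)^2\,dx = \|v\|_m^2$ gives the upper bound. Together these show $|\wc|_m$ and $\|\wc\|_m$ are equivalent norms on $V$.

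I do not anticipate a genuine obstacle here: the statement is essentially a direct instantiation of Proposition~\ref{prop:coercivity} combined with Corollary~\ref{cor:semi_norm_coercivity}, with the only minor point being to verify that $V$ inherits closedness and that all the abstract hypotheses match up verbatim, together with the elementary observation $|\wc|_m \leq \|\wc\|_m$ for the reverse inequality. The one thing worth stating explicitly for the reader is that the coercivity constant on $V$ depends on $V$ only through the angle $\Angle(V,\mathcal{P}_{m-1})$, which quantifies how far $V$ stays away from the space of low-degree polynomials.
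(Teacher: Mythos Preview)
Your proof is correct and follows exactly the route indicated in the paper: the corollary is simply Proposition~\ref{prop:coercivity} instantiated with $H=H^m(\Omega)$ and $a=\Prod{\wc,\wc}_m$, using Lemma~\ref{lem:semi_norm_kernel} and Corollary~\ref{cor:semi_norm_coercivity} to verify the hypotheses, together with the trivial bound $|v|_m\leq\|v\|_m$ for the reverse inequality. You have merely spelled out the details the paper leaves implicit, including the explicit coercivity constant $\gamma\,\beta(V,\mathcal{P}_{m-1})^2$.
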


Now we will show some examples of Poincar\'e- or Friedrichs-type inequalities
or related coercivity results.

\begin{example}\label{ex:projected_poincare_H1}
    There is a constant $C_p$ such that
    \begin{align*}
        \|\textstyle v - \tfrac{1}{|\Omega|} \int_\Omega v\, dx \|_1^2
        \leq C_p |v|_1^2 \qquad \forall v \in H^1(\Omega).
    \end{align*}
\end{example} 
\begin{proof}
    Since $v \mapsto \tfrac{1}{|\Omega|} \int_\Omega v \, dx \in \mathcal{P}_0$
    is an orthogonal projection this is a special case of Lemma~\ref{lem:projected_poincare}.
\end{proof} 

\begin{example}
    Let $\Gamma \subset \partial \Omega$ with
    nonzero $(n-1)$-dimensional Hausdorff measure.
    Then $\Prod{\wc,\wc}_1$ is coercive on
    $H^1_{\Gamma,\int,0}(\Omega) = \{v \in H^1(\Omega) \st \int_\Gamma v \, ds =0 \}$.
\end{example} 
\begin{proof}
    By the trace theorem $v \mapsto v|_\Gamma \subset L^2(\Gamma)$
    is a continuous map and hence $H^1_{\Gamma,\int,0}(\Omega) \subset H^1(\Omega)$ is a closed subspace.
    Since $H^1_{\Gamma,\int,0}(\Omega) \cap \mathcal{P}_0 = \{0\}$
    Corollary~\ref{cor:coercive_Hm_subspace} provides the assertion.
\end{proof} 

\begin{example}\label{ex:poincare_boundary_value}
    Let $\Gamma \subset \partial \Omega$ with
    nonzero $(n-1)$-dimensional Hausdorff measure.
    Then $\Prod{\wc,\wc}_1$ is coercive on
    $H^1_{\Gamma,0}(\Omega) = \{v \in H^1(\Omega) \st v|_\Gamma =0 \}$.
\end{example}
\begin{proof}
    We only need to note that 
    $H^1_{\Gamma,0} (\Omega)$ is a closed subspace of $H^1_{\Gamma,\int,0}(\Omega)$.
\end{proof} 

\begin{example}
    Let $\Gamma \subset \partial \Omega$ with
    nonzero $(n-1)$-dimensional Hausdorff measure.
    Then
    \begin{align*}
        \|v\|_1^2 \leq C |v|_1^2 + \frac{|\Omega|}{|\Gamma|^2}\bigl(\textstyle\int_{\Gamma} v \,ds \bigr)^2
        \qquad \forall v \in H^1(\Omega)
    \end{align*}
    where $C$ is the coercivity constant from Example~\ref{ex:projected_poincare_H1}.
\end{example}
\begin{proof}
    Let $v \in H^1(\Omega)$ then Example~\ref{ex:projected_poincare_H1} provides
    \begin{align*}
        \|v\|_1^2
            \leq \|v-P v\|_1^2 + \|Pv\|_1^2
            \leq C|v|_1^2 + \|Pv\|_0^2
            = C |v|_1^2
                + \frac{|\Omega|}{|\Gamma|^2}\bigl(\textstyle\int_{\Gamma} v \,ds \bigr)^2.
    \end{align*}
\end{proof} 

As a direct consequence we get a version of Friedrichs' inequality
with boundary integrals.
\begin{example}
    Let $\Gamma \subset \partial \Omega$ with
    nonzero $(n-1)$-dimensional Hausdorff measure.
    Then there is a constant $C$ with
    \begin{align*}
        \|v\|_1^2 \leq C\Bigl( |v|_1^2 + \|v\|_{L^2(\Gamma)}^2 \Bigr)
        \qquad \forall v \in H^1(\Omega).
    \end{align*}
\end{example}

\begin{example}
    Let $d=1,2,3$ and $p_1,\dots,p_{d+1} \subset \overline{\Omega}$ affine independent.
    Then $\Prod{\wc,\wc}_2$ is coercive on
    $V = \{v \in H^2(\Omega) \st v(p_1)=\dots=v(p_{d+1})=0\}$.
\end{example} 
\begin{proof}
    By the Sobolev embedding $V$ is closed.
    Furthermore $V\cap \mathcal{P}_1 = \{0\}$ and Corollary~\ref{cor:coercive_Hm_subspace}
    provides the assertion.
\end{proof} 

\begin{example}\label{ex:poincare_navier_bc}
    The bilinear form $\Prod{\wc,\wc}_2$ is coercive on $H^2(\Omega)\cap H^1_0(\Omega)$.
\end{example} 
\begin{proof}
    Since $\Omega$ is bounded we have $H^1_0(\Omega)\cap \mathcal{P}_1 = \{0\}$.
    Hence Corollary~\ref{cor:coercive_Hm_subspace} provides the assertion.
\end{proof} 

\begin{example}
    Let $d=1,2,3$ and $p_1,\dots,p_{d+1} \subset \overline{\Omega}$ affine independent.
    Then the bilinear form $\Prod{\wc,\wc}_2 + b(\wc,\wc)$ with
    \begin{align*}
        b(u,v) = \sum_{i=1}^{d+1} u(p_i)v(p_i)
    \end{align*} 
    is coercive on $H^2(\Omega)$.
\end{example} 
\begin{proof}
    Symmetry and positive semi-definiteness of $b(\wc,\wc)$ are obvious.
    Positive definiteness on $\mathcal{P}_1$ follows from affine independence.
    Finally, the Sobolev embedding implies continuity such that
    Proposition~\ref{prop:coercivity_augmented} provides the assertion.
\end{proof}

\section{Coercivity of the bi- and quadruple-Laplacian operator}
\label{sec:biharmonic}
In the following we show coercivity of the operators $\Delta^2$
and $\Delta^4$ with various boundary conditions.
Since such operators often arise in the context of plate-like
problems, we restrict our considerations to piecewise smooth
domains $\Omega \subset \R^2$. In the following $\nu$ and $\tau$
will denote piecewise smooth oriented unit normal and tangential fields.

We are especially interested in periodic boundary conditions.
To this end we define for the special case of a rectangle
$\Omega$ the periodic spaces
\begin{align*}
    C_{p}^\infty(\Omega) & = \{v|_\Omega  \st v \in C^\infty(\R^2) \text{ is $\Omega$-periodic}\}, &
    H_{p}^k(\Omega) &= \overline{C_{p}^\infty(\Omega)}^{\|\wc\|_k}.
\end{align*} 

\begin{lemma}\label{lemma:bi_laplace}
    Let $V$ be any of the spaces
    \begin{itemize}
        \item
            $H^2_0(\Omega)$,
        \item
            $H^2_{p}(\Omega)$
            with rectangular $\Omega$,
    \end{itemize}
    then $\|\Delta v\|_0^2 = |v|_2^2$ for all $v \in V$.
\end{lemma}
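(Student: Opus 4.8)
The plan is to reduce the identity, in all three cases, to the vanishing of a Hessian--determinant integral over $\Omega$, which is then read off $\partial\Omega$. Recall that in $\R^2$ the $H^2$-seminorm is $|v|_2^2 = \|\partial_1^2 v\|_0^2 + 2\|\partial_1\partial_2 v\|_0^2 + \|\partial_2^2 v\|_0^2$, while $\Delta v = \partial_1^2 v + \partial_2^2 v$; expanding the square of $\Delta v$ therefore gives
\[
    \|\Delta v\|_0^2 - |v|_2^2 = 2\int_\Omega\bigl(\partial_1^2 v\,\partial_2^2 v - (\partial_1\partial_2 v)^2\bigr)\,dx = 2\int_\Omega\det(D^2 v)\,dx
    \qquad \forall\, v\in H^2(\Omega),
\]
where $D^2 v$ is the Hessian. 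Both sides are continuous quadratic forms on $H^2(\Omega)$, hence on $V$, so it suffices to prove $\int_\Omega\det(D^2 v)\,dx = 0$ for $v$ ranging over a dense subspace of $V$ made of functions that are smooth up to $\overline\Omega$: $C_c^\infty(\Omega)$ when $V = H^2_0(\Omega)$, the functions in $C^\infty(\overline\Omega)$ vanishing on $\partial\Omega$ when $V = H^2(\Omega)\cap H^1_0(\Omega)$, and $C_p^\infty(\Omega)$ when $V = H^2_p(\Omega)$ (the last being dense by definition).

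For such $v$ I would exploit that the Hessian determinant is a null Lagrangian: $2\det(D^2 v) = \op{div} F$ with $F = \bigl(\partial_1 v\,\partial_2^2 v - \partial_2 v\,\partial_1\partial_2 v,\; \partial_2 v\,\partial_1^2 v - \partial_1 v\,\partial_1\partial_2 v\bigr) = (\op{adj} D^2 v)\,\nabla v$, which is a one-line computation (equivalently, two integrations by parts applied to the mixed term $\int_\Omega \partial_1^2 v\,\partial_2^2 v\,dx$). By the divergence theorem and the splitting $\nabla v = (\partial_\nu v)\,\nu + (\partial_\tau v)\,\tau$ on $\partial\Omega$ this turns into
\[
    \int_\Omega\det(D^2 v)\,dx = \tfrac12\int_{\partial\Omega} F\cdot\nu\,ds
    = \tfrac12\int_{\partial\Omega}\bigl((\partial_\nu v)\,\partial_\tau^2 v - (\partial_\tau v)\,\partial_\tau\partial_\nu v\bigr)\,ds .
\]
The cases $V = H^2_0(\Omega)$ and $V = H^2_p(\Omega)$ are then immediate: for $v\in C_c^\infty(\Omega)$ the boundary integrand vanishes identically, and for $\Omega$-periodic $v$ on a rectangle the field $F$ is $\Omega$-periodic, so its fluxes through opposite sides of $\Omega$ cancel (equivalently, the integral over a period cell of the divergence of a periodic field is zero).

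The case $V = H^2(\Omega)\cap H^1_0(\Omega)$ is the step I expect to be delicate. Here $v|_{\partial\Omega} = 0$ forces $\partial_\tau v = 0$ on $\partial\Omega$, so the boundary term collapses to $\tfrac12\int_{\partial\Omega}(\partial_\nu v)\,\partial_\tau^2 v\,ds$, and it remains to show that $\partial_\tau^2 v = 0$ on $\partial\Omega$. Differentiating the identity $\partial_\tau v\equiv 0$ along a boundary arc yields $\partial_\tau^2 v = \kappa\,\partial_\nu v$, where $\kappa$ is the curvature of $\partial\Omega$; on a straight boundary segment $\kappa\equiv 0$ (the tangent field $\tau$ is constant there), so $\partial_\tau^2 v$ vanishes and $\int_\Omega\det(D^2 v)\,dx = 0$. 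Hence $\|\Delta v\|_0^2 = |v|_2^2$ holds on the dense smooth subspace and, by continuity, on all of $V$. Note that the geometry of $\Omega$ enters only at this last point: for a genuinely curved piece of $\partial\Omega$ the term $\tfrac12\int_{\partial\Omega}\kappa\,(\partial_\nu v)^2\,ds$ would survive, so for the Navier-type space one really wants $\partial\Omega$ to be straight (polygonal), consistent with the rectangular / plate-like setting here.
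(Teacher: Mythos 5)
Your proof is essentially the paper's: both integrate by parts to reduce $\|\Delta v\|_0^2-|v|_2^2$ to the boundary integral $\int_{\partial\Omega}\bigl(\partial_\nu v\,\partial_\tau^2 v-\partial_\tau\partial_\nu v\,\partial_\tau v\bigr)\,ds$ on a dense smooth subspace and then kill that integral case by case; your packaging via the null Lagrangian $2\det(D^2v)=\op{div}\bigl((\op{adj}D^2v)\nabla v\bigr)$ is just a cleaner way of organizing the same computation. The one substantive difference is in the case $V=H^2(\Omega)\cap H^1_0(\Omega)$: the paper dismisses the boundary term with ``$v|_{\partial\Omega}=0$ and thus the boundary term is zero,'' whereas you correctly observe that $v|_{\partial\Omega}=0$ only gives $\partial_\tau v=0$ directly, and that $\partial_\tau^2 v=0$ requires differentiating the trace identity along the boundary, which produces a curvature term: on a curved arc one is left with $\tfrac12\int_{\partial\Omega}\kappa\,(\partial_\nu v)^2\,ds$, which does not vanish in general. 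Since Section~\ref{sec:biharmonic} only assumes $\Omega$ piecewise smooth, your caveat that the Navier case really needs a polygonal (straight) boundary is a genuine sharpening of the paper's argument rather than a gap in yours; it is consistent with the known fact that $\|\Delta v\|_0=|v|_2$ on $H^2\cap H^1_0$ holds for polygons but acquires a curvature correction otherwise. The only point you share with the paper and leave unexamined is the density of the chosen smooth subspace in $H^2(\Omega)\cap H^1_0(\Omega)$ with respect to $\|\wc\|_2$, which is a nontrivial (though true, for Lipschitz or polygonal $\Omega$) approximation statement; a one-line citation would close that.
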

\begin{proof}
    Let $\tilde{V}$
    a corresponding dense subspace of smooth functions given by
    $C_0^\infty(\overline{\Omega})$
    or $C_{p}^\infty(\Omega)$,
    respectively.
    Then partial integration for $v \in \tilde{V}$ gives
    \begin{align}\label{eq:laplace_square}
        \begin{split}
            \| \Delta v\|_0^2
            &= |v|_2^2
                + \sum_{i,j=1}^2 \int_{\partial \Omega} \partial_{i} v \partial_{jj} v \nu_i - \partial_{ji} v \partial_{i}v \nu_j\, ds \\
            &= |v|_2^2
                + \int_{\partial \Omega} \tfrac{\partial v}{\partial \nu} \Delta v - \nabla \tfrac{\partial}{\partial \nu} v \cdot \nabla v \, ds \\
            &= |v|_2^2
                + \int_{\partial \Omega} \tfrac{\partial v}{\partial \nu} \tfrac{\partial^2}{\partial \tau^2} v - \tfrac{\partial}{\partial \tau} \tfrac{\partial}{\partial \nu} v \tfrac{\partial}{\partial \tau} v \, ds.
        \end{split} 
    \end{align} 

    For the case $\tilde{V}=C_0^\infty(\overline{\Omega})$ the boundary term obviously vanishes.
    For $V=H_{p}^2(\Omega)$ we can split the boundary according to
    $\partial \Omega = \Gamma_W \cup \Gamma_E \cup \Gamma_N \cup \Gamma_S$
    such that we have (up to translation)
    \begin{align*}
        v|_{\Gamma_W} &= v|_{\Gamma_E}, &
        v|_{\Gamma_N} &= v|_{\Gamma_S}, &
        \tfrac{\partial}{\partial \nu} v |_{\Gamma_W} &= -\tfrac{\partial}{\partial \nu} v|_{\Gamma_E}, &
        \tfrac{\partial}{\partial \nu} v |_{\Gamma_N} &= -\tfrac{\partial}{\partial \nu} v|_{\Gamma_S}.
    \end{align*}
    Now the minus sign (resulting from the flipped orientation of the normal) implies that
    boundary integrals from opposing boundary segments cancel out.
\end{proof} 

\begin{proposition}\label{prop:H2_BC}
    Let $V$ be any of the spaces
    \begin{itemize}
        \item
            $H^2_0(\Omega)$,
        \item
            $\{v \in H^2_{p}(\Omega) \st \Textint_{\partial \Omega} v \, ds=0\}$
            with rectangular $\Omega$,
        \item
            $\{v \in H^2_{p}(\Omega) \st \Textint_{\Omega} v \, dx=0\}$
            with rectangular $\Omega$,
    \end{itemize}
    then the bilinear form
    $a(u,v) = \Textint_\Omega \Delta u \Delta v \, dx$
    is coercive on $V$.
\end{proposition} 
\begin{proof}
    By Lemma~\ref{lemma:bi_laplace} we have $a(v,v) = |v|_2^2$ for $v \in V$.
    Since $\mathcal{P}_1 \cap V = \{0\}$
    for any choice of $V$, Corollary~\ref{cor:coercive_Hm_subspace}
    now provides the assertion.
\end{proof} 

\begin{remark}
  The bilinear form
  $a(u,v) = \Textint_\Omega \Delta u \Delta v \, dx$
  is also coercive on the space
  $H^2(\Omega) \cap H^1_0(\Omega)$ for convex or $C^2$-regular domains (see e.g. \cite{NazarovSweers2007}).
  This follows from classic elliptic regularity theory which provides
  that for $f=\Delta v \in L^2(\Omega)$ the solution $v$ of the second order
  equation $\Delta v = f$
  is $H^2$-regular and satisfies the bound
  \begin{align*}
    \|v\|_2^2 \leq C \|f\|_0^2 = C \|\Delta v\|_0^2.
  \end{align*}
\end{remark}

For the quadruple-Laplacian we get similar results:
\begin{lemma}\label{lemma:quad_laplace}
    Let $V$ be any of the spaces
    \begin{itemize}
        \item
            $H^4_0(\Omega)$,
        \item
            $H^4_{p}(\Omega)$
            with rectangular $\Omega$,
    \end{itemize}
    then $\|\Delta^2 v\|_0^2 = |v|_4^2$ for all $v \in V$.
\end{lemma}

\begin{proof}
    Let $\tilde{V}$ a corresponding dense subspace of smooth functions given by
    $C_0^\infty(\overline{\Omega})$ or $C_{p}^\infty(\Omega)$,
    respectively.
    Now let $v \in \tilde{V}$ and $w=\Delta v \in H^2(\Omega)$.
    Then we can apply Lemma~\ref{lemma:bi_laplace} to get
    \begin{align*}
        \|\Delta^2 v\|_0^2
        = |\Delta v|_2^2
        = \sum_{|s|=2} \|D^s \Delta v\|_0^2
        = \sum_{|s|=2} \|\Delta D^s v\|_0^2.
    \end{align*} 
    Hence it remains to show $\| \Delta z\|_0^2 = |z|_2^2$
    for $z = D^sv$ and $|s|=2$.
    To this end we again apply partial integration as in~\eqref{eq:laplace_square}
    to get
    \begin{align}
        \|\Delta z\|_0^2
        &= |z|_2^2
            + \int_{\partial \Omega} \tfrac{\partial z}{\partial \nu} \tfrac{\partial^2}{\partial \tau^2} z - \tfrac{\partial}{\partial \tau} \tfrac{\partial}{\partial \nu} z \tfrac{\partial}{\partial \tau} z \, ds.
    \end{align} 
    In the case of $V=H^4_0(\Omega)$ all boundary terms vanish.
    Similarly, for $V=H^4_{p}(\Omega)$ periodicity of $v$ implies periodicity
    of $z$, such that the boundary term vanishes by the same arguments as in Lemma~\ref{lemma:bi_laplace}.
\end{proof}

\begin{proposition}\label{prop:H4_BC}
    Let $V$ be any of the spaces
    \begin{itemize}
        \item
            $H^4_0(\Omega)$,
        \item
            $\{v \in H^4_{p}(\Omega) \st \Textint_{\partial \Omega} v \, ds=0\}$
            with rectangular $\Omega$,
        \item
            $\{v \in H^4_{p}(\Omega) \st \Textint_{\Omega} v \, dx=0\}$
            with rectangular $\Omega$,
    \end{itemize}
    then the bilinear form
    $a(u,v) = \Textint_\Omega \Delta^2 u \Delta^2 v \, dx$
    is coercive on $V$.
\end{proposition} 
\begin{proof}
    By Lemma~\ref{lemma:quad_laplace} we have $a(v,v) = |v|_4^2$ for $v \in V$.
    In view of Corollary~\ref{cor:coercive_Hm_subspace} it remains to show
    $\mathcal{P}_3 \cap V = \{0\}$ for all choices of $V$. To this end let
    $p \in \mathcal{P}_3 \cap V$.

    For $V = H^4_0(\Omega) \subset H^4(\Omega) \cap H^3_0(\Omega)$ the
    boundary conditions 
    provide $D^s p =0$ for $|s|\leq 2$ on $\partial \Omega$.
    For $|s|=2$ we have $D^s p \in \mathcal{P}_1$ and thus $D^s p = 0$ on $\Omega$.
    Hence $p$ is bilinear and we get for $|s|=1$ that $D^s p \in \mathcal{P}_1$ and thus $D^s p = 0$.
    As a consequence $p$ is constant which gives $p=0$.
    
    Finally periodicity of $p \in H_{p}^4(\Omega)$ implies that $p=\op{const}$
    which together with $\Textint_{\partial \Omega} p \, ds =0$ or $\Textint_\Omega p \, dx = 0$
    gives $p=0$.
\end{proof} 

\begin{remark}
  Similar to the $H^2$-case above,
  coercivity of the bilinear form
  $a(u,v) = \Textint_\Omega \Delta^2 u \Delta^2 v \, dx$
  on the space
  $H^4(\Omega) \cap H^3_0(\Omega)$
  for convex or $C^2$-regular domains
  follows from elliptic regularity.
  To see this we note that $v \in H^4(\Omega) \cap H^3_0(\Omega)$
  implies that $D^sv=0$ on $\partial \Omega$ for $|s|\leq 2$.
  Hence we can apply the same argument as  in the $H^2$-case several times to get
  \begin{align*}
    C^2\|\Delta^2 v\|_0^2
      &\geq C \|\Delta v\|_2^2
      = C \sum_{|s|\leq 2} \|\Delta D^s v\|_0^2
      \geq \sum_{|s|\leq 2} \|D^s v\|_2^2
      \geq \sum_{|s|\leq 4} \|v\|_0^2 = \|v\|_4^2.
  \end{align*}
\end{remark}

\begin{remark}
  For the space
  $H^4_\Delta(\Omega)=\{v \in H^4(\Omega)\st v=0,\; \Delta v =0 \text{ on }\partial \Omega\}$
  with rectangular $\Omega$
  coercivity of the bilinear form
  $a(u,v) = \Textint_\Omega \Delta^2 u \Delta^2 v \, dx$
  can be shown by combining elliptic regularity with Corollary~\ref{cor:coercive_Hm_subspace}.
  Without loss of generality assume that $\Omega$ is parallel to the axes
  such that partial derivatives correspond to tangential and normal derivatives.
  First note that $v \in H^4_\Delta(\Omega)$ implies
  \begin{align}\label{eq:delta_bc}
    \tfrac{\partial}{\partial \tau} v  = \tfrac{\partial^2}{\partial \tau^2}v = 0, \qquad
    \tfrac{\partial^2}{\partial \nu^2}v = \Delta v - \tfrac{\partial^2}{\partial \tau^2}v = 0.
  \end{align}
  Due to $\Delta v=0$ on $\partial \Omega$ we can use elliptic regularity once to get
  \begin{align*}
    C \|\Delta^2 v\|_0^2
      &\geq \|\Delta v\|_2^2
      = \sum_{|s|\leq 2} \|\Delta D^s v\|_0^2
      \geq \sum_{|s|= 2} \|\Delta D^s v\|_0^2
  \end{align*}
  Now the boundary conditions \eqref{eq:delta_bc} allow to again use elliptic regularity for the
  non-mixed second order terms $D^s v$ with $s=(2,0)$ and $s=(0,2)$ to get
  \begin{align*}
    C \|\Delta D^s v\|_0^2 \geq \|D^s v\|_2^2 \geq |D^s v|_2^2.
  \end{align*}
  For the remaining mixed term $z=D^s v$ with $s=(1,1)$
  using partial integration as in the proof of Lemma~\ref{lemma:quad_laplace} gives
  \begin{align*}
    \|\Delta D^{s}v\|_0^2 = |D^s v|_2^2
  \end{align*}
  because all boundary terms vanish.
  Hence we have shown $\|\Delta^2v\|_0^2 \geq \rho |v|_2^2$
  for some constant $\rho>0$.
  Coercivity now follows from Corollary~\ref{cor:coercive_Hm_subspace}
  because $H^4_\Delta(\Omega)\cap \mathcal{P}_3 = \{0\}$.
  To see this let $p \in H^4_\Delta(\Omega)\cap \mathcal{P}_3$.
  Then the boundary conditions
  \eqref{eq:delta_bc} imply for $s=(2,0)$ and $s=(0,2)$ that $D^s p = 0$ on $\partial \Omega$
  which, together with $D^s p \in \mathcal{P}_1$ implies $D^s p = 0$ on $\Omega$.
  Hence $p$ is bilinear on the rectangle $\Omega$ which together with $p|_{\partial \Omega}=0$
  gives $p=0$.
\end{remark}

\bibliography{paper}
\bibliographystyle{plainurl}

\end{document}